\numberwithin{equation}{section}
\newtheorem{theorem}{Theorem}[section]
\newtheorem{corollary}{Corollary}[theorem]
\newtheorem{lemma}[theorem]{Lemma}
\newtheorem{proposition}[theorem]{Proposition}
\begin{document}
\author{Alexander E. Patkowski}
\title{On summation formulas in probability theory}

\maketitle

\begin{abstract} We offer some summation formulas that appear to have great utility in probability theory. The proofs require some recent results from analysis that have thus far been applied to basic hypergeometric functions.\end{abstract}

% AMS keywords (used in AMS journals)
\keywords{\it Keywords: \rm Factorial moments; Discrete Random variables}

% AMS subject classifications (used in AMS journals)
\subjclass{ \it 2020 Mathematics Subject Classification 60G50, 60E05.}

\section{Introduction and main theorem} 
In considering discrete random variables, there are a great many useful formulas for calculating probabilistic quantities. For example, a well known fact is if $X$ is discretely distributed, then $\mathop{\mathbb{E}[X]}=\sum_{n\ge0}nP(X=n)=\sum_{n\ge0}P(X>n).$ Another interesting example is if $X,$ and $Y$ are discretely distributed, then
\begin{equation}\sum_{n=0}^{\infty}P(X\le n)P(Y=n)=1-\sum_{n=0}^{\infty}P(X=n+1)P(Y\le n). \end{equation}
A proof of (1.1) may be accomplished as follows. Let
$$ Z:=\{(x,y): x\le y,  x\in X,y\in Y\},$$
then the left side of (1.1) represents $\sum_{n\ge0}P(Z=n)$ if $X$ and $Y$ are independent. Taking the complement of the left side with the concept that $P(Z^{c})=1-P(Z),$ we obtain the right hand side. Interestingly, (1.1) also has a proof using summation by parts, a simple tool from analysis.
\\*
\par The objective of this paper is to apply some recent results in analysis regarding power series to obtain summation formulas for probability theory. In order to prove our main theorem we will require a result due to Andrews and Frietas [1].

\begin{proposition} ([1, Proposition 1.2]) Let $f(z)=\sum_{n=0}^{\infty} \alpha_n z^n$ be analytic for $|z|<1,$ and assume that for some positive integer $N$ and a fixed
complex number $\alpha$ we have that (i) $\sum_{n=0}^{\infty}(n+1)_{N}\left(\alpha_{N+n}-\alpha_{N+n-1}\right)$ converges,  and (ii) $\lim_{n\rightarrow \infty} (n+1)_{N}(\alpha_{N+n}-\alpha)=0.$  Then 
$$\frac{1}{N}\lim_{z\rightarrow1^{-}}\left(\frac{\partial^N}{\partial z^N}(1-z)f(z)\right)=\sum_{n=0}^{\infty}\prod_{j=1}^{N-1}(n+j)\left(\alpha-\alpha_{n+N-1}\right).$$
\end{proposition}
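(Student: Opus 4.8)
Here is a plan for the proof.

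The plan is to deduce the proposition from two standard analytic tools: Abel's limit theorem for power series and summation by parts. First I would write out
\[
(1-z)f(z) = \alpha_0 + \sum_{n=1}^{\infty}(\alpha_n-\alpha_{n-1})z^n,
\]
which is still a convergent power series for $|z|<1$ since $f$ is analytic there, hence so is $(1-z)f(z)$. Differentiating term by term $N$ times inside the disc (legitimate there) and reindexing $n\mapsto N+m$ yields
\[
\frac{\partial^N}{\partial z^N}\bigl((1-z)f(z)\bigr) = \sum_{m=0}^{\infty}(m+1)_N\,(\alpha_{N+m}-\alpha_{N+m-1})\,z^m,
\]
where the factor $(m+1)_N=(m+1)(m+2)\cdots(m+N)=\tfrac{(N+m)!}{m!}$ is produced by the differentiation (the terms with $n<N$, including the constant, drop out). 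Hypothesis (i) states exactly that this coefficient sequence is summable, so Abel's limit theorem permits passing $z\to 1^-$ under the sum:
\[
\lim_{z\to 1^-}\frac{\partial^N}{\partial z^N}\bigl((1-z)f(z)\bigr) = \sum_{m=0}^{\infty}(m+1)_N\,(\alpha_{N+m}-\alpha_{N+m-1}).
\]

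Next I would transform this series by summation by parts. Put $c_m=(m+1)_N$ and $b_m=\alpha_{N+m}-\alpha$, so that $\alpha_{N+m}-\alpha_{N+m-1}=b_m-b_{m-1}$ with $b_{-1}=\alpha_{N-1}-\alpha$. The decisive algebraic identity is $c_m-c_{m-1}=N\,(m+1)_{N-1}$, obtained by factoring the common product $(m+1)(m+2)\cdots(m+N-1)$ out of both $c_m$ and $c_{m-1}$. A finite Abel transform then gives, for each $M$,
\[
\sum_{m=0}^{M}c_m(b_m-b_{m-1}) = c_M b_M - c_0 b_{-1} - N\sum_{m=0}^{M-1}(m+2)_{N-1}\,b_m.
\]
Hypothesis (ii) forces $c_M b_M=(M+1)_N(\alpha_{N+M}-\alpha)\to 0$, and combined with (i) this also shows $\sum_{m\ge 0}(m+2)_{N-1}b_m$ converges. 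Letting $M\to\infty$ and dividing by $N$, the constant term $-\tfrac1N c_0 b_{-1}=(N-1)!\,(\alpha-\alpha_{N-1})$ is precisely the $n=0$ term of the target series, while the substitution $n=m+1$ turns $-\sum_{m\ge 0}(m+2)_{N-1}b_m$ into its $n\ge 1$ tail; assembling these pieces gives
\[
\frac1N\lim_{z\to1^-}\frac{\partial^N}{\partial z^N}\bigl((1-z)f(z)\bigr) = \sum_{n=0}^{\infty}(n+1)_{N-1}(\alpha-\alpha_{n+N-1}) = \sum_{n=0}^{\infty}\prod_{j=1}^{N-1}(n+j)(\alpha-\alpha_{n+N-1}),
\]
as claimed.

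The term-by-term differentiation and the two index shifts are routine; the real work is carried by the hypotheses — (i) licenses the boundary passage in Abel's theorem, and (ii) annihilates the endpoint term $c_M b_M$ in the summation by parts, without which the identity would acquire a nonzero correction. I expect the main pitfall to be the bookkeeping in the last step, keeping the shifted index and the isolated $m=0$ boundary contribution straight, so I would verify the cases $N=1$ and $N=2$ explicitly against the stated formula as a safeguard.
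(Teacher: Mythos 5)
Your proof is correct. A point of comparison worth noting: the paper does not prove this proposition at all --- it is quoted from Andrews--Freitas [1, Proposition 1.2] and used as a black box --- so there is no internal argument to measure yours against; what you have written is a complete, self-contained derivation in the spirit of that reference's ``extension of Abel's lemma.'' The individual steps check out: term-by-term differentiation inside $|z|<1$ gives $\frac{\partial^N}{\partial z^N}\bigl((1-z)f(z)\bigr)=\sum_{m\ge0}(m+1)_N(\alpha_{N+m}-\alpha_{N+m-1})z^m$; hypothesis (i) is exactly summability of these coefficients, so Abel's limit theorem justifies $z\to1^-$; the identity $(m+1)_N-(m)_N=N\,(m+1)_{N-1}$ is right (factor out $(m+1)\cdots(m+N-1)$, leaving $(m+N)-m=N$); hypothesis (ii) kills the boundary term $(M+1)_N(\alpha_{N+M}-\alpha)$, and convergence of the remaining series follows since the other two terms in the finite Abel transform converge; and the bookkeeping at the end is consistent, since $-\tfrac{1}{N}c_0b_{-1}=(N-1)!\,(\alpha-\alpha_{N-1})$ is precisely the $n=0$ term of the target sum while the shift $n=m+1$ produces the tail, with the degenerate case $N=1$ (empty product, $(n+1)_0=1$) also covered. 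Your suggested sanity checks at $N=1,2$ are a good habit but not needed --- the argument as written is already airtight.
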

This formula has been fruitful in applications to basic hypergeometric functions. As the present author has shown in [5], it has applications to other areas of analysis. Our main result is the next theorem, and generalizes the expected value.
\begin{theorem} Let N be a positive integer. We have,
$$\begin{aligned}\frac{1}{N}\lim_{z\rightarrow1^{-}}\left(\frac{\partial^N}{\partial z^N}\mathop{\mathbb{E}[z^X]})\right)=&\sum_{n=0}^{\infty}\prod_{j=1}^{N-1}(n+j)P(X>n+N-1)\\ &=\frac{1}{N}\mathop{\mathbb{E}[X(X-1)\cdots(X-N+1)]}. \end{aligned}$$
\end{theorem}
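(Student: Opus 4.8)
The plan is to feed the Andrews--Freitas result (Proposition 1.1) not the probability generating function $\mathbb{E}[z^X]=\sum_{n\ge0}P(X=n)z^n$ directly, but the companion series
\[
g(z):=\frac{1-\mathbb{E}[z^X]}{1-z}=\sum_{n\ge0}P(X>n)\,z^n ,
\]
which is analytic for $|z|<1$; the power-series identity follows from $1-z^n=(1-z)(1+z+\cdots+z^{n-1})$ together with a rearrangement of the resulting (absolutely convergent) double sum. Before invoking Proposition 1.1 I would dispatch the second equality by bare hands: $\mathbb{E}[z^X]$ may be differentiated term by term on $|z|<1$, giving $\frac{\partial^N}{\partial z^N}\mathbb{E}[z^X]=\sum_{m\ge N}m(m-1)\cdots(m-N+1)\,P(X=m)\,z^{m-N}$, and since every summand is nonnegative, letting $z\to1^-$ and applying monotone convergence yields $\lim_{z\to1^-}\frac{\partial^N}{\partial z^N}\mathbb{E}[z^X]=\mathbb{E}[X(X-1)\cdots(X-N+1)]$. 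If this quantity is infinite the asserted identities hold trivially, so from here on I assume the $N$-th factorial moment of $X$ is finite.

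Next I would apply Proposition 1.1 with $\alpha_n=P(X>n)$ and $\alpha=0$. Because $\alpha_{N+n}-\alpha_{N+n-1}=P(X>N+n)-P(X>N+n-1)=-P(X=N+n)$, hypothesis (i) asks for convergence of $-\sum_{n\ge0}(n+1)_N\,P(X=N+n)$, which under the substitution $m=N+n$ (so that $(n+1)_N=m!/(m-N)!$) is precisely the finiteness of $\sum_{m\ge N}\tfrac{m!}{(m-N)!}P(X=m)=\mathbb{E}[X(X-1)\cdots(X-N+1)]$, our standing assumption; hypothesis (ii), $(n+1)_N\,P(X>N+n)\to 0$, then follows since its left-hand side is bounded above by $\sum_{m\ge N+n+1}\tfrac{m!}{(m-N)!}P(X=m)$, a tail of the convergent series in (i). Checking these two hypotheses is the only genuinely technical step, and it is the point at which the tacit finiteness assumption is forced.

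With the hypotheses in place, Proposition 1.1 gives
\[
\frac{1}{N}\lim_{z\to1^-}\frac{\partial^N}{\partial z^N}\bigl((1-z)g(z)\bigr)=-\sum_{n=0}^{\infty}\prod_{j=1}^{N-1}(n+j)\,P(X>n+N-1),
\]
and the decisive observation is that $(1-z)g(z)=1-\mathbb{E}[z^X]$, whose $N$-th $z$-derivative for $N\ge1$ is simply $-\frac{\partial^N}{\partial z^N}\mathbb{E}[z^X]$; the two minus signs cancel, which is exactly the first asserted equality, and combining it with the first paragraph closes the chain. I would end with a remark that the middle and right members can also be identified directly by interchanging summations, $\sum_{n\ge0}\prod_{j=1}^{N-1}(n+j)P(X>n+N-1)=\sum_{m}P(X=m)\sum_{n=0}^{m-N}\prod_{j=1}^{N-1}(n+j)$, and evaluating the inner sum with the hockey-stick identity $\sum_{k=0}^{M}\binom{k+N-1}{N-1}=\binom{M+N}{N}$, which collapses it to $\tfrac{1}{N}m(m-1)\cdots(m-N+1)$ --- an elementary cross-check independent of Proposition 1.1.
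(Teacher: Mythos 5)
Your proposal is correct and follows essentially the paper's route: the paper applies Proposition 1.1 with $\alpha_n=P(X\le n)$, $\alpha=1$ and the identity $\mathbb{E}[z^X]=(1-z)\sum_{n\ge0}P(X\le n)z^n$, while you feed in the complementary series $g(z)=\sum_{n\ge0}P(X>n)z^n$ with $\alpha=0$, so that $(1-z)g(z)=1-\mathbb{E}[z^X]$ and the two minus signs cancel --- the same argument up to taking complements. The only substantive difference is that you are more careful than the paper's two-line proof: you verify hypotheses (i)--(ii) of Proposition 1.1 (showing they amount to finiteness of the $N$-th factorial moment), justify the second equality by termwise differentiation and monotone convergence, and add the Tonelli/hockey-stick cross-check, all of which the paper leaves implicit.
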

\begin{proof} We apply Proposition 1.1 and set $\alpha_n=P(X\le n),$ for a discrete random variable $X.$ The right side is clear since $(\alpha-\alpha_{n+N-1})=(1-P(X\le n+N-1))=P(X> n+N-1).$ Now by summation by parts (or shifting summation indexes) we know that
\begin{equation} \sum_{n=0}^{\infty}P(X=n)z^n=(1-z)\sum_{n=0}^{\infty}P(X\le n)z^n.\end{equation}
On the other hand, the left side of (1.2) is precisely $\mathop{\mathbb{E}[z^X]},$ the probability generating function for $P(X=n).$ The result now follows
\end{proof}
Next we consider an application of our main theorem by appealing to From's work on discrete random variables [4], where it was found that the factorial moment provides for the most effective upper bound for the survival function $P(X\ge x).$
\begin{corollary} For $x>0,$ we have that
$$P(X\ge x)\le \inf_{0\le N<x}\frac{(N+1)\sum_{n=0}^{\infty}\prod_{j=1}^{N}(n+j)P(X>n+N)}{x(x-1)\cdots(x-N)}.$$
\end{corollary}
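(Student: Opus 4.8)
The plan is to recognize the numerator in Corollary 1.3 as a factorial moment via Theorem 1.2, and then deduce the bound from a Markov-type inequality of the kind isolated by From [4]. First I would reindex Theorem 1.2, replacing $N$ there by $N+1$, to get
$$(N+1)\sum_{n=0}^{\infty}\prod_{j=1}^{N}(n+j)\,P(X>n+N)=\mathbb{E}[X(X-1)\cdots(X-N)],$$
the $(N+1)$-th factorial moment of $X$; call it $\mu_{(N+1)}$. If $\mu_{(N+1)}=+\infty$ the asserted inequality is vacuous for that value of $N$, so I may assume throughout that the relevant factorial moment is finite (which is also what makes hypotheses (i)--(ii) of Proposition 1.1, hence Theorem 1.2, applicable with $\alpha_n=P(X\le n)$).

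Next I would establish the generalized Markov inequality: for every integer $N$ with $0\le N<x$,
$$x(x-1)\cdots(x-N)\,P(X\ge x)\le \mathbb{E}[X(X-1)\cdots(X-N)].$$
Set $g(t)=\prod_{i=0}^{N}(t-i)=t(t-1)\cdots(t-N)$. Since $X$ is a nonnegative integer-valued discrete random variable, $g(X)\ge 0$ on the whole support of $X$: the product contains the factor $X-X=0$ whenever $X\le N$, and is a product of positive terms when $X>N$. On the interval $(N,\infty)$ one has $g(t)>0$ and $g'(t)=g(t)\sum_{i=0}^{N}(t-i)^{-1}>0$, so $g$ is strictly increasing there. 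Because $x>N$ and $X$ is integer-valued, the event $\{X\ge x\}$ forces $X\ge \lceil x\rceil>N$, whence $g(X)\ge g(x)>0$ on that event. Therefore $g(X)\ge g(x)\,\mathbf{1}_{\{X\ge x\}}$ pointwise, and taking expectations gives the displayed estimate. (Equivalently, one may simply cite From [4], where this bound is derived and shown to be the sharpest among moment bounds, explaining why the infimum is the natural quantity.)

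Finally I would combine the two displays: dividing by $x(x-1)\cdots(x-N)>0$ gives, for each admissible $N$,
$$P(X\ge x)\le \frac{\mathbb{E}[X(X-1)\cdots(X-N)]}{x(x-1)\cdots(x-N)}=\frac{(N+1)\sum_{n=0}^{\infty}\prod_{j=1}^{N}(n+j)P(X>n+N)}{x(x-1)\cdots(x-N)},$$
and taking the infimum over $0\le N<x$ yields exactly the statement of the corollary.

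I do not expect a serious conceptual obstacle; the main care is bookkeeping. One must handle noninteger $x$ correctly (reading $\{X\ge x\}$ as $\{X\ge\lceil x\rceil\}$), verify monotonicity of $g$ on all of $(N,\infty)$ rather than merely on $[N+1,\infty)$, and keep track of the finiteness assumption so that both Theorem 1.2 and the expectation bound are legitimate. Once these are pinned down, the result is a direct substitution of Theorem 1.2 into the factorial-moment Markov inequality.
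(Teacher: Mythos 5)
Your proposal is correct and follows essentially the same route as the paper: reindex the main theorem to recognize the numerator as the $(N+1)$-th factorial moment and then apply From's factorial-moment Markov bound, taking the infimum over admissible $N$. The only difference is that the paper simply cites [4, pg.~214, eq.~(3)--(4)] for that inequality, whereas you supply a short self-contained proof of it via $g(t)=t(t-1)\cdots(t-N)$, which is a harmless (and arguably welcome) elaboration.
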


\begin{proof}This follows from [4, pg.214, eq.(3)--(4)] together with Theorem 1.1.
\end{proof}
Another interesting identity follows from considering [6, Theorem 5.2]. Let $S_n=\sum_{i\ge1}^{n}X_i,$ where $X_i$ are discrete random variables. As in [6, pg.334, eq.(5.1)--(5.2)], define:
\\* 
\\*
(i) $\eta_n$ to be the number of $i$ such that $S_i>0,$ and $1\le i \le n.$
\\*
(ii) $T_n$ to be the first $i$ such that $\max_{1\le i\le n}S_i$ is achieved, and if $0\ge\max_{1\le i\le n}S_i$ then $T_n=0.$
\\*
Let $m_n=P(S_n>0),$ then if $\sum_{n\ge0}m_n/n$ converges, [6, Theorem 5.2] says that $\eta_n\rightarrow \eta,$ $T_n\rightarrow T$ as $n\rightarrow\infty.$ It follows that Theorem 1.1 together with [6, Theorem 5.2] implies, 
\begin{equation}\sum_{n=0}^{\infty}\prod_{j=1}^{N-1}(n+j)P(\eta>n+N-1)=\sum_{n=0}^{\infty}\prod_{j=1}^{N-1}(n+j)P(T>n+N-1).\end{equation}

\section{The Kolmogorov-Prokhorov formula}

The main object of this section is to create what appears to be a new Kolmogorov-Prokhorov formula based on the ideas presented in [2]. To accomplish this task, we will recall some probabilistic concepts on convergence of sequences of random variables.
\begin{lemma} ([3, pg.129, Definition 6.1.1]) The sequence ${X_n}$ converges in probability to $X$ if
$$P(|X-X_n|>\epsilon)\rightarrow0,$$
for any $\epsilon>0,$ as $n\rightarrow\infty.$
\end{lemma}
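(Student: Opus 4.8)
The statement labeled as Lemma 2.1 is not a proposition that calls for an argument: it is a verbatim recollection of Definition 6.1.1 of [3], namely the definition of convergence in probability. Consequently there is nothing to prove. The plan is simply to present it as a definitional recollection — either with no proof environment at all, or with a one-line remark to the effect that ``this is the definition'' — exactly as is customary when a paper imports a standard notion from a cited text. Its only ``content'' is to fix the terminology $X_n \to X$ \emph{in probability}, meaning $P(|X-X_n|>\epsilon)\to 0$ as $n\to\infty$ for every $\epsilon>0$, so that it can be used unambiguously in the construction that follows.

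If one wishes to record \emph{why} this is the right notion to recall at this point, the approach would be to explain its role in the sequel rather than to derive anything. The new Kolmogorov--Prokhorov formula is to be obtained by feeding the power-series machinery of Proposition 1.1 (through Theorem 1.1) the probability generating function $\mathbb{E}[z^{X}]$ of a random variable $X$ that arises as an in-probability limit $X_n\to X$; convergence in probability is essentially the weakest hypothesis under which one can still hope to transfer the generating function, and hence the factorial-moment identities of Theorem 1.1, to the limiting variable. So Lemma 2.1 is being recalled precisely because it is the ``mode of convergence'' that the later argument will hinge on.

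The genuine subtlety — and the step I would flag as the main obstacle for the \emph{next} result, not for Lemma 2.1, whose proof is empty — is that convergence in probability alone does not propagate moments or power-series coefficients. One will need to check that the hypotheses of Proposition 1.1, namely (i) convergence of $\sum_{n=0}^{\infty}(n+1)_{N}(\alpha_{N+n}-\alpha_{N+n-1})$ and (ii) $\lim_{n\to\infty}(n+1)_{N}(\alpha_{N+n}-\alpha)=0$ with $\alpha_n=P(X\le n)$, actually hold for the limit $X$, and that they can be obtained from uniform information about the $X_n$. In practice this forces a supplementary uniform-integrability or tightness bound on $\{X_n\}$, and locating and stating that hypothesis cleanly is where the real work of Section 2 will lie; for Lemma 2.1 itself, one states the definition and proceeds.
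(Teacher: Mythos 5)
You are correct: the statement is simply Definition 6.1.1 of [3] recalled for later use, and the paper accordingly gives it no proof, so your ``nothing to prove'' treatment matches the paper exactly. Your side speculation about how it feeds into Section 2 is slightly off the paper's actual route (Theorem 2.4 uses Lemma 2.3 with $g(x)=x$ together with the Abel-type identity (2.2) applied to $\alpha_n=P(w<n+1)$, $\beta_n=\mathbb{E}[X_{n+1}]$, not the generating function $\mathbb{E}[z^X]$ of the limit), but that does not affect the correctness of your handling of Lemma 2.1 itself.
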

 
\begin{lemma} ([3, pg.132, Definition 6.1.3]]) The random variable $X_n$ converges to $X$ in the $r$-order mean if
$$\mathop{\mathbb{E}|X-X_n|^r}\rightarrow0,$$
as $n\rightarrow\infty.$ 
\end{lemma}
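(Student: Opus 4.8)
The displayed statement is a \emph{definition} quoted from [3, pg.132, Definition 6.1.3], not a proposition, so strictly speaking there is nothing to establish: it merely names the condition $\mathbb{E}|X-X_n|^r\to0$ as ``convergence of $X_n$ to $X$ in the $r$-order mean.'' Consequently my plan is not to derive anything but to make the definition precise and to record the standing hypotheses under which the symbol on display is meaningful.

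First I would address well-posedness. For $\mathbb{E}|X-X_n|^r$ to denote a finite nonnegative real number we need $X,X_n\in L^r$, i.e. the $r$-th absolute moments must exist; this is the tacit assumption of [3], and I would state it explicitly here so that the definition is not vacuous. Granting it, the assignment $n\mapsto\mathbb{E}|X-X_n|^r$ is a well-defined sequence in $[0,\infty)$, and the notion being introduced is simply that this sequence tends to $0$. Nothing further is asserted, so no computation is called for.

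Finally, although it is not part of the definition itself, I would flag for later use the standard link between this mode of convergence and the one in Lemma 2.1: by Markov's inequality, $P(|X-X_n|>\epsilon)\le\epsilon^{-r}\,\mathbb{E}|X-X_n|^r$ for every $\epsilon>0$, so convergence in the $r$-order mean entails convergence in probability. Since the statement to be treated is a definition rather than a claim, the only real task is expository, and there is no genuine obstacle beyond recording the $L^r$-integrability hypothesis that the subsequent Kolmogorov--Prokhorov construction will rely upon.
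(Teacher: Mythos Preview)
Your assessment is correct: the statement is a definition lifted verbatim from [3], and the paper itself supplies no proof---it simply records the definition and moves on. Your added remarks on the $L^r$ hypothesis and the Markov-inequality link to convergence in probability are accurate and in fact anticipate exactly the implication the paper invokes (via Chebyshev) in the proof of Theorem~2.4.
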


\begin{lemma} ([3, pg.138, Theorem 6.1.8]) Suppose $X_n$ converges to $X$ in probability, and let $g(x)$ be continuous with respect the the values of the random variable $X.$ If $g(X_n)$ is uniformly integrable, then
$$\mathop{\mathbb{E}|g(X)-g(X_n)|}\rightarrow0,$$
as $n\rightarrow\infty.$ Furthermore, $\mathop{\mathbb{E}[g(X_n)]}\rightarrow\mathop{\mathbb{E}[g(X)]}$ as $n\rightarrow\infty.$

\end{lemma}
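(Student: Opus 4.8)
The plan is to recognize the statement as a packaging of two classical facts — the continuous mapping theorem and Vitali's convergence theorem — and to assemble them carefully. Write $Y_n = g(X_n)$ and $Y = g(X)$. The first step is to transfer the hypothesis from $X_n$ to $Y_n$: I claim $Y_n \to Y$ in probability. Since a sequence converges in probability exactly when every subsequence admits a further subsequence converging almost surely to the same limit, I would take an arbitrary subsequence, extract a sub-subsequence $X_{n_k} \to X$ almost surely (a standard property of convergence in probability, Lemma 2.1), and then use the continuity of $g$ pointwise to obtain $g(X_{n_k}) \to g(X)$ almost surely. Since the original subsequence was arbitrary, this yields $Y_n \to Y$ in probability.

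The second step is to verify that the limit $Y = g(X)$ is integrable, which is needed before any $L^1$ statement makes sense. Uniform integrability of $\{Y_n\}$ forces $\sup_n \mathbb{E}|Y_n| < \infty$, and applying Fatou's lemma along an almost surely convergent subsequence of $\{X_n\}$ (obtained as above, with the full sequence in the role of the arbitrary subsequence) gives $\mathbb{E}|Y| \le \liminf_k \mathbb{E}|Y_{n_k}| < \infty$. A single integrable random variable is uniformly integrable, so $\{Y_n\}$ and $\{Y\}$ may be treated on the same footing. The third step records the standard consequence of uniform integrability: for every $\epsilon > 0$ there is $\delta > 0$ such that $P(E) < \delta$ implies both $\mathbb{E}[|Y_n|\,\mathbf{1}_E] < \epsilon$ for all $n$ and $\mathbb{E}[|Y|\,\mathbf{1}_E] < \epsilon$ — this drops out of the estimate $\mathbb{E}[|Y_n|\,\mathbf{1}_E] \le \mathbb{E}[|Y_n|\,\mathbf{1}_{\{|Y_n| > M\}}] + M\,P(E)$ after choosing $M$ large and then $\delta$ small.

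With these pieces in place the core estimate is short. Fix $\epsilon > 0$, take the corresponding $\delta$, and set $A_n = \{|Y_n - Y| > \epsilon\}$; by Step 1 we have $P(A_n) \to 0$, so $P(A_n) < \delta$ for all large $n$. Splitting the expectation over $A_n$ and its complement and using $|Y_n - Y| \le |Y_n| + |Y|$ on $A_n$,
$$\mathbb{E}|Y_n - Y| \le \mathbb{E}\bigl[|Y_n - Y|\,\mathbf{1}_{A_n^{c}}\bigr] + \mathbb{E}\bigl[|Y_n|\,\mathbf{1}_{A_n}\bigr] + \mathbb{E}\bigl[|Y|\,\mathbf{1}_{A_n}\bigr] \le \epsilon + \epsilon + \epsilon = 3\epsilon$$
for all large $n$. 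Letting $n \to \infty$ and then $\epsilon \to 0$ gives $\mathbb{E}|g(X) - g(X_n)| \to 0$, and the final assertion follows from $\bigl|\mathbb{E}[g(X_n)] - \mathbb{E}[g(X)]\bigr| \le \mathbb{E}|g(X_n) - g(X)| \to 0$.

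I expect the only real obstacle to be bookkeeping rather than conceptual content: because we are handed convergence in probability and not almost sure convergence, both the continuous mapping step and the integrability-of-the-limit step must be routed through subsequences, and the $L^1$ estimate hinges on trading ``uniform integrability'' for ``uniform absolute continuity.'' If $g$ is merely continuous on the set of values actually taken by $X$ (the likely reading of the hypothesis), one intersects every almost sure statement with the full-measure event that $X$ lands in that set, which costs nothing.
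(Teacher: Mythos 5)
Your proof is correct, but note that the paper offers no argument for this lemma at all: it is quoted directly from Borovkov [3, pg.~138, Theorem 6.1.8] and used as a black box, so there is no in-paper proof to compare against. What you have written is the standard proof of that textbook fact --- continuous mapping via the subsequence criterion for convergence in probability, integrability of $g(X)$ by Fatou along an almost surely convergent subsequence, and the Vitali-type splitting over $A_n=\{|Y_n-Y|>\epsilon\}$ using uniform absolute continuity --- and each step, including your handling of continuity only on the values of $X$, checks out.
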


Recall that an integer-valued random variable is a type of discrete random variable. The Kolmogorov-Prokhorov formula [3, pg.77, Theorem 4.4.1] says that if $w$ is an integer-valued  random variable, not dependent on the future (i.e. [3, pg.75, Definition 4.4.1]), and $\sum_{n\ge1}P(w\ge n)\mathop{\mathbb{E}|X_n|}$ converges, then 
\begin{equation}\mathop{\mathbb{E}[S_w]}=\sum_{n=1}^{\infty}P(w\ge n)\mathop{\mathbb{E}[X_n]}. \end{equation}
The formula (2.1) is useful in studying Markov random variables [3, pg.76].

\par From [2, Lemma 3.3] we find a variation on [1, Proposition 1.2]. Suppose that the power series in the variable $z$ of the sequences $\alpha_n,$ $\beta_n,$ and $\alpha_n\beta_n,$ are all analytic for $|z|<1,$ and that $\sum_{n=0}^{\infty}(\alpha-\alpha_{n})$ and $\sum_{n=0}^{\infty}(\beta-\beta_{n}),$ as well as $\sum_{n=0}^{\infty}(\alpha\beta-\alpha_{n}\beta_n)$ converge. Assume $\lim_{n\rightarrow \infty} n(\alpha_{n}-\alpha)=\lim_{n\rightarrow \infty} n(\beta_{n}-\beta)=0,$ and also $\lim_{n\rightarrow \infty} n(\alpha_n\beta_{n}-\alpha\beta)=0.$ Then,
\begin{equation}-\sum_{n=0}^{\infty}\beta_n\left(\alpha-\alpha_n\right)=\alpha\sum_{n=0}^{\infty}\left(\beta-\beta_n\right)-\sum_{n=0}^{\infty}\left(\alpha\beta-\alpha_n\beta_n\right).\end{equation}

\begin{theorem} Suppose $X_n$ converges in mean to $X.$ For $w$ an integer-valued  random variable not dependent on the future,
$$\mathop{\mathbb{E}[S_w]} =\sum_{n=1}^{\infty}\left(\mathop{\mathbb{E}[X]}-\mathop{\mathbb{E}[X_n]}P(w<n)\right)-\sum_{n=1}^{\infty}\left(\mathop{\mathbb{E}[X]}-\mathop{\mathbb{E}[X_n]}\right), $$
provided the series on the right side converge, and the stopping time $w$ is finite.
\end{theorem}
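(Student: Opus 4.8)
The plan is to specialize the power-series identity (2.2) of [2, Lemma 3.3] to the sequences that arise from the stopping-time problem and then to recognise one of the resulting sums by means of the Kolmogorov-Prokhorov formula (2.1). Concretely, I would take $\alpha=1$, $\beta=\mathbb{E}[X]$, and, for $n\ge0$,
\[ \alpha_n=P(w\le n),\qquad \beta_n=\mathbb{E}[X_{n+1}]. \]
The shift in the definition of $\beta_n$ is dictated by the shape of $S_w=\sum_{i=1}^{w}X_i$, which carries no $X_0$ term: with this choice, relabelling $m=n+1$,
\[ -\sum_{n=0}^{\infty}\beta_n(\alpha-\alpha_n)=-\sum_{n=0}^{\infty}\mathbb{E}[X_{n+1}]\,P(w>n)=-\sum_{m=1}^{\infty}P(w\ge m)\,\mathbb{E}[X_m]=-\mathbb{E}[S_w], \]
the last step being (2.1). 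This is the same device, applied to a product of two sequences, that drove the proof of Theorem 1.1.

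The second task is to check the hypotheses of (2.2) for these sequences. Since the stopping time $w$ is finite, $P(w\le n)\to1$, whence $\alpha-\alpha_n=P(w>n)\to0$; and since $X_n\to X$ in mean, $|\mathbb{E}[X]-\mathbb{E}[X_n]|\le\mathbb{E}|X-X_n|\to0$ (equivalently, apply Lemma 2.3 with $g(x)=x$), so $\beta_n\to\beta$. In particular the sequences $\alpha_n$, $\beta_n$ and $\alpha_n\beta_n$ are bounded, so their generating series are analytic on $|z|<1$. The series $\sum_{n\ge0}(\alpha-\alpha_n)=\sum_{n\ge0}P(w>n)=\mathbb{E}[w]$ converges because $w$ has finite mean, while the convergence of $\sum_{n\ge0}(\beta-\beta_n)$ and of $\sum_{n\ge0}(\alpha\beta-\alpha_n\beta_n)$ is exactly the proviso that ``the series on the right side converge'': up to the relabelling $m=n+1$ these are the series $\sum_{m\ge1}(\mathbb{E}[X]-\mathbb{E}[X_m])$ and $\sum_{m\ge1}(\mathbb{E}[X]-P(w<m)\mathbb{E}[X_m])$ appearing in the statement.

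Granting (2.2), its left side is $-\mathbb{E}[S_w]$ by the display above, while its right side, after the substitution $m=n+1$ and the identity $P(w\le n)=P(w<m)$ for integer-valued $w$, equals
\[ \sum_{m=1}^{\infty}\bigl(\mathbb{E}[X]-\mathbb{E}[X_m]\bigr)-\sum_{m=1}^{\infty}\bigl(\mathbb{E}[X]-P(w<m)\,\mathbb{E}[X_m]\bigr). \]
Solving for $\mathbb{E}[S_w]$ yields precisely the stated formula. The step I expect to require the most care is discharging the three ``little-$o$'' conditions of (2.2): $n(\alpha_n-\alpha)\to0$, $n(\beta_n-\beta)\to0$ and $n(\alpha_n\beta_n-\alpha\beta)\to0$. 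The first reads $nP(w>n)\to0$, which does follow from $\mathbb{E}[w]<\infty$; the other two assert that the mean-convergence $\mathbb{E}[X_n]\to\mathbb{E}[X]$ occurs at rate $o(1/n)$, which is not a consequence of mere convergence of the series $\sum(\mathbb{E}[X]-\mathbb{E}[X_n])$ unless its terms are eventually of one sign (as they are, for instance, when $\mathbb{E}[X_n]$ is monotone). I would therefore either fold such an $o(1/n)$ rate into the hypotheses or remark that it is automatic in the monotone case; the remainder is bookkeeping with the shifted indices.
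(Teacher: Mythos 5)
Your proposal follows essentially the same route as the paper: the paper's proof also substitutes $\alpha_n=P(w<n+1)=P(w\le n)$ and $\beta_n=\mathbb{E}[X_{n+1}]$ (so $\alpha=1$, $\beta=\mathbb{E}[X]$) into the identity (2.2), identifies $-\sum_{n\ge0}\mathbb{E}[X_{n+1}]P(w\ge n+1)$ with $-\mathbb{E}[S_w]$ via the Kolmogorov--Prokhorov formula (2.1), and rearranges. If anything, your verification of the hypotheses of (2.2) --- in particular the observation that $nP(w>n)\to0$ needs $\mathbb{E}[w]<\infty$ and that the $o(1/n)$ rate for $\mathbb{E}[X]-\mathbb{E}[X_n]$ is not implied by the stated assumptions --- is more careful than the paper's own proof, which passes over these conditions with ``after some rearranging.''
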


\begin{proof} By [3, pg.132], since $X_n$ converges in mean, it follows that $X_n$ also converges in probability by Chebyshev's inequality. Hence we may apply Lemma 2.3 ([3, pg.138, Theorem 6.1.8]). We choose $g(x)=x,$ and select $\alpha_n=P(w<n+1)$ and $\beta_n=\mathop{\mathbb{E}[X_{n+1}]}$ in (2.2). Note that $\lim_{n\rightarrow\infty}P(w<n+1)=1$ by [3, pg.75, Definition 4.4.1], and the hypothesis that the stopping time $w$ is finite. This gives us the theorem after some rearranging.

\end{proof}

Some remarks on convergence of the series in Theorem 2.4 are worth noting. Since $0\le P(w<n)\le1$ for each $n,$ we know that if $X_n\ge0$ almost everywhere, then
$$|\mathop{\mathbb{E}[X]}-\mathop{\mathbb{E}[X_n]}|\ll |\mathop{\mathbb{E}[X]}-\mathop{\mathbb{E}[X_n]}P(w<n)|.$$
The sequence $X_n$ would then satisfy convergence in mean as well as convergence of the series in question if we imposed the condition 
$$|\mathop{\mathbb{E}[X]}-\mathop{\mathbb{E}[X_n]}P(w<n)|\ll n^{-\epsilon-1},$$
for every $\epsilon>0,$ by a simple comparison test with $\zeta(\epsilon+1)=\sum_{n\ge1}n^{-\epsilon-1}.$

1390 Bumps River Rd. \\*
Centerville, MA
02632 \\*
USA \\*
E-mail: alexpatk@hotmail.com, alexepatkowski@gmail.com

\end{document}